\numberwithin{equation}{section}
\newtheorem{Theorem}{Theorem}[section]
\newtheorem{Corollary}[Theorem]{Corollary}
\newtheorem{Lemma}[Theorem]{Lemma}
\newtheorem{Proposition}[Theorem]{Proposition}
\begin{document}

\allowdisplaybreaks

\newcommand{\arXivNumber}{1909.13262}

\renewcommand{\thefootnote}{}

\renewcommand{\PaperNumber}{091}

\FirstPageHeading

\ShortArticleName{Locally Nilpotent Derivations of Free Algebra of Rank Two}

\ArticleName{Locally Nilpotent Derivations\\
of Free Algebra of Rank Two\footnote{This paper is a~contribution to the Special Issue on Algebra, Topology, and Dynamics in Interaction in honor of Dmitry Fuchs. The full collection is available at \href{https://www.emis.de/journals/SIGMA/Fuchs.html}{https://www.emis.de/journals/SIGMA/Fuchs.html}}}

\Author{Vesselin DRENSKY~$^\dag$ and Leonid MAKAR-LIMANOV~$^{\ddag\S}$}

\AuthorNameForHeading{V.~Drensky and L.~Makar-Limanov}

\Address{$^\dag$~Institute of Mathematics and Informatics, Bulgarian Academy of Sciences, 1113 Sofia, Bulgaria}
\EmailD{\href{mailto:drensky@math.bas.bg}{drensky@math.bas.bg}}

\Address{$^\ddag$~Department of Mathematics, Wayne State University Detroit, MI 48202, USA}
\EmailD{\href{mailto:lml@math.wayne.edu}{lml@math.wayne.edu}}

\Address{$^\S$~Department of Mathematics, The Weizmann Institute of Science, Rehovot 7610001, Israel}

\ArticleDates{Received October 01, 2019, in final form November 13, 2019; Published online November 18, 2019}

\Abstract{In commutative algebra, if $\delta$ is a locally nilpotent derivation of the polynomial algebra $K[x_1,\ldots,x_d]$ over a field $K$ of characteristic 0 and $w$ is a nonzero element of the kernel of $\delta$, then $\Delta=w\delta$ is also a locally nilpotent derivation with the same kernel as $\delta$. In this paper we prove that the locally nilpotent derivation $\Delta$ of the free associative algebra $K\langle X,Y\rangle$ is determined up to a multiplicative constant by its kernel. We show also that the kernel of $\Delta$ is a free associative algebra and give an explicit set of its free generators.}

\Keywords{free associative algebras; locally nilpotent derivations; algebras of constants}

\Classification{16S10; 16W25; 16W20; 13N15}

\rightline{\it To the 80th anniversary of Dmitry Fuchs}

\renewcommand{\thefootnote}{\arabic{footnote}}
\setcounter{footnote}{0}

\section{Introduction}
Let $K$ be a field of characteristic 0. Locally nilpotent derivations $\delta$ of polynomial algebras $K[x_1,\ldots,x_d]$ and their kernels $\ker(\delta)$ are subjects of active investigation. Traditionally, the kernel of a derivation $\delta$ of $K[x_1,\ldots,x_d]$ is called the algebra of constants of $\delta$ and is denoted by $K[x_1,\ldots,x_d]^{\delta}$. The algebras of constants of locally nilpotent derivations play an essential role in the study of the automorphism group of $K[x_1,\ldots,x_d]$,
including the generation of $\operatorname{Aut}(K[x,y])$ by tame automorphisms, the Jacobian conjecture, in invariant theory, fourteenth Hilbert's problem and other important topics. See the books by Nowicki~\cite{No1}, van den Essen~\cite{E}, and Freudenburg~\cite{Fr2} for details. In particular, using locally nilpotent derivations,
Rentschler~\cite{R} gave an easy proof of the theorem of Jung--van der Kulk~\cite{J, K} that all automorphisms of $K[x,y]$ are tame. Another natural proof based on locally nilpotent derivations was given by Makar-Limanov~\cite{ML2}, see also the book~\cite{D1}. The most natural way to define the Nagata automorphism~\cite{Na}
\[
(x,y,z)\to\big(x-2\big(xz+y^2\big)y-\big(xz+y^2\big)^2z,y+\big(xz+y^2\big)z,z\big)
\]
is also in terms of locally nilpotent derivations, see Bass~\cite{B} and Smith~\cite{Sm}. The famous Jacobian conjecture is equivalent to several conjectures stated in the language of locally nilpotent derivations, see~\cite{E}. Several nice counterexamples to fourteenth Hilbert's problem are obtained as algebras of constants of locally nilpotent derivations, see the survey and the book by Freudenburg~\cite{Fr1, Fr2} and the survey by Nowicki~\cite{No2}. On the other hand, the well known theorem of Weitzenb\"ock~\cite{W} states that if $\delta$ is a nilpotent linear operator acting on the $d$-dimensional vector space $Kx_1\oplus \cdots\oplus Kx_d$, then the algebra of constants of the locally nilpotent derivation of $K[x_1,\ldots,x_d]$ which extends $\delta$ is a finitely generated algebra. A modern proof of the theorem is given by Seshadri~\cite{Ses}, with further simplification by Tyc~\cite{T}, see also~\cite{No1}. Clearly, the algebra of constants $K[x_1,\ldots,x_d]^{\delta}$ coincides with the algebra of invariants of the linear operator
\[
\exp(\delta)=1+\frac{\delta}{1!}+\frac{\delta^2}{2!}+\cdots .
\]

If $\delta$ is a locally nilpotent derivation of $K[x_1,\ldots,x_d]$ and $0\not=w\in K[x_1,\ldots,x_d]^{\delta}$, then $\Delta=w\delta$ is also a locally nilpotent derivation with the same algebra of constants as~$\delta$. In particular, starting from the Weitzenb\"ock derivation of $K[x,y,z]$ defined by
\[
\delta(x)=-2y,\qquad \delta(y)=z,\qquad \delta(z)=0,
\]
$w=xz+y^2\in K[x,y,z]^{\delta}$, and $\Delta=\big(xz+y^2\big)\delta$ one obtains the Nagata automorphism as $\exp(\Delta)$. We would like to mention that Shestakov and Umirbaev~\cite{SU1, SU2} proved the Nagata conjecture that the Nagata automorphism is wild with methods of noncommutative algebra.

Locally nilpotent derivations of free associative algebras $K\langle X_1,\ldots,X_d\rangle$ have not been studied as intensively as in the commutative case. We shall mention the old result of Falk \cite{Fa} who described the intersection of the kernels of the formal partial derivatives $\partial/\partial X_j$ of $K\langle X_1,\ldots,X_d\rangle$, and the relations of the formal partial derivatives with theory of algebras with polynomial identity due to Specht \cite{Sp}, see also \cite{D1} for further development. Drensky and Gupta~\cite{DG} studied the kernels of Weitzenb\"ock derivations of $K\langle X_1,\ldots,X_d\rangle$ and established that in all nontrivial cases the kernel is not finitely generated. As in the case of polynomial algebras, the candidate for a~wild automorphism, the automorphism of Anick \cite[p.~343]{Co}
\[
(X,Y,Z)\to(X+Z(XZ-ZY),Y+(XZ-ZY)Z,Z)
\]
can also be expressed as $\exp(\Delta)$ for the locally nilpotent derivation $\Delta$ of $K\langle X,Y,Z\rangle$ defined by
\begin{gather*}
\Delta(X)=Z(XZ-ZY),\qquad\Delta(Y)=(XZ-ZY)Z,\qquad\Delta(Z)=0.
\end{gather*}
The wildness of the Anick automorphism was established by Umirbaev \cite{U}.

In this paper we study locally nilpotent derivations $\Delta$ of the free unitary associative algebra $K\langle X,Y\rangle$ over a field $K$ of characteristic~0. As in the commutative case we shall call the kernel of $\Delta$ the algebra of constants of~$\Delta$ and denote it by $K\langle X,Y\rangle^{\Delta}$. Our main result is that the locally nilpotent derivations of $K\langle X,Y\rangle$ are determined up to a multiplicative constant by their algebras of constants.

It is easy to see that $\Delta$ is of the form $\Delta(U)=0$, $\Delta(V)=f(U)$, with respect to a suitable system of generators $U$, $V$ of $K\langle X,Y\rangle$. This follows from the description of Rentschler \cite{R} of the locally nilpotent derivations of $K[x,y]$ and the isomorphism of the automorphism groups of $K[x,y]$ and $K\langle X,Y\rangle$ which is a consequence of the theorem of Jung--van der Kulk~\cite{J, K} and its analogue for the automorphisms of $K\langle X,Y\rangle$ due to Czerniakiewicz \cite{Cz1, Cz2} and Makar-Limanov~\cite{ML}. This result is similar to the recent description of locally nilpotent derivations of the free Poisson algebra with two generators given by Makar-Limanov, Turusbekova, and Umirbaev~\cite{MLTU}.

As a consequence of the result of Lane~\cite{L} and Kharchenko~\cite{Kh} the algebra of constants $K\langle X,Y\rangle^{\Delta}$ of the nontrivial Weitzenb\"ock derivation $\Delta$ of $K\langle X,Y\rangle$ is a free associative algebra. A set of free generators of this algebra was given by Drensky and Gupta~\cite{DG}. We generalize this result and show that the algebra $K\langle X,Y\rangle^{\Delta}$ is free for any locally nilpotent derivation $\Delta$ of $K\langle X,Y\rangle$. As in \cite{DG} we give an explicit set of free generators of $K\langle X,Y\rangle^{\Delta}$. See also \cite{Jo} where it is shown that $K\langle X,Y\rangle^{\Delta}$ is a free associative algebra for a nontrivial homogeneous derivation (and from which the freeness in our case can be deduced).

\section{Preliminaries}\label{section2}

For an algebra $R$ over a field $K$ a linear operator $\delta\colon R\rightarrow R$ is called a derivation if it satisfies the Leibniz law $\delta(ab) = \delta(a)b + a\delta(b)$.
The kernel of a derivation $\delta$ is denoted by~$R^{\delta}$ and the elements of the kernel are called $\delta$-constants (or just constants when this is not confusing).
A~derivation~$\delta$ is called locally nilpotent if for any $r \in R$ there exists a natural number~$n$ (which depends on~$r$) for which $\delta^n(r) = 0$. The function
\[
\deg(r) = \max\big(d\,|\, \delta^d(r) \neq 0\big), \qquad \deg(0) = -\infty,
\]
is a degree function with familiar properties:
\begin{gather*}
\deg(r_1r_2) = \deg(r_1) + \deg(r_2),\qquad \deg(r_1 + r_2) \leq \max(\deg(r_1), \deg(r_2)),\\
\deg(r_1 + r_2) =\max(\deg(r_1), \deg(r_2)) \qquad \text{when} \quad \deg(r_1) \neq \deg(r_2),\\
\deg(\delta(r)) = \deg(r) - 1\qquad \text{if} \quad \delta(r) \neq 0.
\end{gather*}

The set of all lnds (locally nilpotent derivations) of $R$ is denoted by $\text{LND}(R)$.

The intersection $\bigcap R^{\delta}$, $\delta \in \text{LND}(R)$, of kernels of all locally nilpotent derivations of $R$ is denoted by $\text{AK}(R)$ (absolute Konstanten of~$R$, sometimes denoted as $\text{ML}(R)$).

If $\delta \in \text{LND}(R)$ and characteristic of $K$ is zero then the linear operator
\[
\exp(\delta)=1+\frac{\delta}{1!}+\frac{\delta^2}{2!}+\cdots
\]
is an automorphism of $R$.

In the sequel we fix a field $K$ of characteristic 0 and consider the polynomial algebra $K[x,y]$ and the free associative algebra $K\langle X,Y\rangle$. Let
\[
\pi\colon \ K\langle X,Y\rangle\to K[x,y]
\]
be the natural homomorphism. We denote the elements $U$, $V$, etc.\ of $K\langle X,Y\rangle$ by upper case symbols and their images under $\pi$ by the same lower case symbols $u$, $v$, etc. Let $C$ be the commutator ideal of $K\langle X,Y\rangle$. It is generated by the commutator
\[
T_1 = [Y, X] = YX - XY.
\]

By the theorem of Jung--van der Kulk \cite{J, K}, the automorphisms of $K[x,y]$ are tame, i.e., are compositions of affine automorphisms
\[
x \rightarrow a_1 x + a_2 y + a_3, \qquad y \rightarrow b_1 x + b_2 y + b_3, \qquad a_i,b_i\in K,\qquad a_1b_2 - a_2b_1 \neq 0,
\]
and triangular automorphisms
\[
x \rightarrow x, \qquad y \rightarrow y + p(x),\qquad p(x)\in K[x].
\]

A similar theorem of Czerniakiewicz \cite{Cz1, Cz2} and Makar-Limanov \cite{ML} states that the automorphisms of $K\langle X,Y\rangle$ are also tame. Therefore
\[
\Psi(T_1) = cT_1, \qquad c \in K^{\ast},
\]
for any automorphism $\Psi$ of $K\langle X,Y\rangle$ (indeed, just check that this is true for affine and triangular automorphisms).

The structure of the automorphism groups of $K[x, y]$ and $K\langle X,Y\rangle$ is also known, it is a~free product of the subgroups of affine and triangular automorphisms with amalgamation along their intersection~\cite{Se}. So we can think that there is a group $H$ isomorphic to $\operatorname{Aut}K[x,y]$ and $\operatorname{Aut}K\langle X,Y\rangle$ which acts on $K[x, y]$ and $K\langle X,Y\rangle$.

Any automorphism of $K\langle X,Y\rangle$ induces an automorphism of $K[x, y]$ and, since the structure of the group $H$ insures that this is one to one correspondence, any automorphism of $K[x,y]$ can be uniquely lifted to an automorphism of $K\langle X,Y\rangle$.

We shall use below a lexicographic ordering of monomials of $K\langle X,Y\rangle$ defined by $Y\gg X > 1$ and denote by $\overline{S}$ the leading monomial of $S \in K\langle X,Y\rangle$.

In the sequel we shall show that we can reduce our considerations to the case when the lnd~$\Delta$ is such that
\[
\Delta(X)=0,\qquad \Delta(Y)=F=f(X),
\]
where $0\not=f(x)\in K[x]$. In this special case we shall define the operator $\boxdot$ on $K\langle X,Y\rangle$ by
\[
\boxdot(A)= YAF - FAY,\qquad A\in K\langle X,Y\rangle,
\]
and shall fix the sequence $T_1,T_2,\ldots$, starting with $T_1= YX - XY$ and then inductively
\[
T_{i+1} = \boxdot^i(T_1).
\]

\section{Description of locally nilpotent derivations}

Though the lnds of $K\langle X,Y\rangle$ are similar to the lnds of $K[x,y]$ there are also significant differences.

It is quite clear that $\text{AK}(K[x, y]) = K$ (just observe that the partial derivatives $\frac{\partial}{\partial x}$ and $\frac{\partial}{\partial y}$ are locally nilpotent)
but we shall show later that $\text{AK}(K\langle X,Y\rangle) = K[T_1]$. The following lemma shows that $\text{AK}(K\langle X,Y\rangle) \supseteq K[T_1]$.

\begin{Lemma}\label{commutator goes to 0} $\delta(T_1) = 0$ for any lnd of $K\langle X,Y\rangle$.
\end{Lemma}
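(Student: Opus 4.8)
The plan is to exploit the one-parameter family of automorphisms generated by $\delta$ together with the fact, recalled in the preliminaries, that every automorphism of $K\langle X,Y\rangle$ multiplies $T_1$ by a nonzero scalar. Since $\delta$ is locally nilpotent and $\operatorname{char}K = 0$, for every scalar $t \in K$ the operator $\exp(t\delta)$ is again an automorphism of $K\langle X,Y\rangle$. Hence there is a function $c\colon K \to K^{\ast}$ with
\[
\exp(t\delta)(T_1) = c(t)\,T_1, \qquad t \in K.
\]

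First I would observe that the left-hand side is a polynomial in $t$: because $\delta$ is locally nilpotent, $\delta^n(T_1) = 0$ for all $n$ exceeding the $\delta$-degree $N = \deg(T_1)$, so that
\[
\exp(t\delta)(T_1) = \sum_{n=0}^{N} \frac{t^n}{n!}\,\delta^n(T_1).
\]
Applying any linear functional $\phi$ on $K\langle X,Y\rangle$ with $\phi(T_1) = 1$ shows that $c(t) = \sum_{n=0}^{N} \frac{t^n}{n!}\,\phi\big(\delta^n(T_1)\big)$ is itself a polynomial in $t$. Since $K$ is infinite, the displayed equality of polynomial functions in $t$ is in fact an identity in $K\langle X,Y\rangle[t]$, so I may compare coefficients of each power of $t$ on the two sides.

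The key step is to read off the coefficient of $t^1$: writing $c(t) = \sum_k c_k t^k$, this gives $\delta(T_1) = c_1 T_1$, i.e.\ $T_1$ is a $\delta$-eigenvector. From here the conclusion is immediate in two equivalent ways. Iterating $\delta(T_1) = c_1 T_1$ yields $\delta^n(T_1) = c_1^{n} T_1$ for all $n$; local nilpotency forces $\delta^n(T_1) = 0$ for large $n$, and since $T_1 \neq 0$ this is possible only if $c_1 = 0$. Equivalently, if $\delta(T_1) \neq 0$ then the degree function satisfies $\deg(\delta(T_1)) = \deg(T_1) - 1$, which contradicts $\delta(T_1) = c_1 T_1$ with $c_1 \neq 0$. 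Either way $\delta(T_1) = 0$, as claimed.

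The main obstacle is the passage from the scalar-valued family $c(t)$ to the infinitesimal statement $\delta(T_1) = c_1 T_1$: one must justify that $c(t)$ is genuinely a polynomial and that coefficientwise comparison is legitimate, which is precisely where the hypothesis $\operatorname{char}K = 0$ is used (both so that $\exp(t\delta)$ is defined and so that $K$ is infinite). Once $T_1$ is known to be a $\delta$-eigenvector, local nilpotency settles the matter at once.
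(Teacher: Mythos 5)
Your proof is correct, and it rests on the same central device as the paper's own argument: the one-parameter family $\Psi_t=\exp(t\delta)$ of automorphisms, the tameness-based fact (Czerniakiewicz, Makar-Limanov) that every automorphism of $K\langle X,Y\rangle$ sends $T_1$ to a nonzero scalar multiple $c\,T_1$, and the observation that local nilpotency makes $\Psi_t(T_1)=c(t)T_1$ polynomial in $t$. Where you diverge is only in the endgame. The paper invokes the one-parameter group law $\Psi_s\Psi_t=\Psi_{s+t}$, which gives the functional equation $c(s)c(t)=c(s+t)$; a nonzero polynomial can satisfy this only if $c\equiv 1$, whence $\exp(t\delta)(T_1)=T_1$ identically and $\delta(T_1)=0$. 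You never use the group law at all: instead you compare coefficients of $t^{1}$ (legitimate, as you note, because $K$ is infinite in characteristic $0$) to obtain the infinitesimal statement $\delta(T_1)=c_1T_1$, and then eliminate the eigenvalue $c_1$ by the standard remark that a locally nilpotent operator has no nonzero eigenvalue on a nonzero vector. Both finishes are routine once the setup is in place; yours is marginally more self-contained, needing only a single identity of polynomials in $t$ rather than the multiplicative structure of the whole family, while the paper's functional-equation argument shows in one stroke that $c\equiv 1$, i.e., that $T_1$ is literally fixed by every $\exp(t\delta)$ (which of course also follows from $\delta(T_1)=0$). The external ingredient is identical in both arguments, so the proofs are of essentially the same depth.
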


\begin{proof} If $\delta \in \text{LND}(K\langle X,Y\rangle)$ then $\lambda\delta \in \text{LND}(K\langle X,Y\rangle)$ for any $\lambda \in K$. Take $\Psi_{\lambda} = \exp(\lambda\delta)$; then $\Psi_{\lambda}([Y,X]) = c(\lambda)[Y,X]$, where $c(t) \in K[t]$ (recall that~$\delta$ is an lnd). On the other hand $\Psi_{\lambda} \Psi_{\mu} = \Psi_{\lambda + \mu}$, i.e., $c(s)c(t) =c(s + t)$. Since $c(s) \ne 0$ this is possible only if $c(t) = 1$. Hence $\delta([Y,X]) = 0$.
\end{proof}

Now we shall prove that lnds of $K\langle X,Y\rangle$ are similar to those of $K[x,y]$.

\begin{Proposition}\label{similarity} Let $\Delta$ be a locally nilpotent derivation of $K\langle X,Y\rangle$. Then there is a system of generators $U$, $V$ of $K\langle X,Y\rangle$ and a polynomial $f(U)$ depending on~$U$ only, such that $\Delta(U)=0$, $\Delta(V)=f(U)$.
\end{Proposition}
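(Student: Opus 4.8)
The plan is to push $\Delta$ down to the polynomial algebra, apply Rentschler's normal form there, and then lift the resulting one-parameter flow back to $K\langle X,Y\rangle$, using the uniqueness of lifting to pin $\Delta$ down. First I would check that $\Delta$ preserves the commutator ideal $C$. Since $C$ is generated by $T_1$ and $\Delta(T_1)=0$ by Lemma~\ref{commutator goes to 0}, the Leibniz rule gives $\Delta(AT_1B)=\Delta(A)T_1B+AT_1\Delta(B)\in C$ for all $A,B$, so $\Delta(C)\subseteq C$. Hence $\Delta$ descends to a derivation $\delta$ of $K\langle X,Y\rangle/C=K[x,y]$ satisfying $\pi\circ\Delta=\delta\circ\pi$, and $\delta$ is locally nilpotent because $\delta^n(\pi(r))=\pi(\Delta^n(r))=0$ for $n$ large and every element of $K[x,y]$ has the form $\pi(r)$. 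By the description of Rentschler~\cite{R} there is an automorphism $\theta$ of $K[x,y]$ such that, writing $u=\theta(x)$ and $v=\theta(y)$, the pair $(u,v)$ generates $K[x,y]$ and $\delta(u)=0$, $\delta(v)=g(u)$ for some $g\in K[u]$.

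Next I would transport everything upstairs. Lifting $\theta$ to the unique $\Theta\in\operatorname{Aut}K\langle X,Y\rangle$ inducing it, I set $U=\Theta(X)$ and $V=\Theta(Y)$; then $U,V$ generate $K\langle X,Y\rangle$ and $\pi(U)=u$, $\pi(V)=v$. Consider the flow $\psi_t=\exp(t\delta)$ on $K[x,y]$. Since $\delta(g(u))=0$, one gets $\psi_t(u)=u$ and $\psi_t(v)=v+tg(u)$, a triangular automorphism in the coordinates $(u,v)$. On one hand, $\Psi_t:=\exp(t\Delta)$ is an automorphism of $K\langle X,Y\rangle$ with $\pi\circ\Psi_t=\psi_t\circ\pi$, so it is the unique lift of $\psi_t$. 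On the other hand, the endomorphism $\chi_t$ of $K\langle X,Y\rangle$ determined by $\chi_t(U)=U$, $\chi_t(V)=V+tg(U)$ is an automorphism (with inverse $U\mapsto U$, $V\mapsto V-tg(U)$) which also reduces modulo $C$ to $\psi_t$. By uniqueness of the lift, $\Psi_t=\chi_t$, that is, $\exp(t\Delta)(U)=U$ and $\exp(t\Delta)(V)=V+tg(U)$ for every $t\in K$.

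Finally I would read off $\Delta$ itself. Expanding $\exp(t\Delta)=1+t\Delta+\frac{t^2}{2!}\Delta^2+\cdots$, which is a polynomial in $t$ on each argument since $\Delta$ is locally nilpotent, and comparing the coefficients of $t$ in the two identities above (legitimate because $K$ has characteristic $0$ and is therefore infinite) yields $\Delta(U)=0$ and $\Delta(V)=g(U)$. Taking $f=g$ gives $\Delta(U)=0$, $\Delta(V)=f(U)$, as required. The one genuinely load-bearing step is the identification $\Psi_t=\chi_t$: it rests on the isomorphism of the automorphism groups of $K[x,y]$ and $K\langle X,Y\rangle$ and the consequent uniqueness of lifting, and it is precisely this mechanism that forces the ``triangular after abelianization'' behaviour of $\Delta$ to be genuinely triangular on the free algebra. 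Everything else is bookkeeping with the Leibniz rule and Rentschler's normal form.
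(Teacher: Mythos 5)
Your proof is correct and follows essentially the same route as the paper: abelianize $\Delta$ to a locally nilpotent $\delta$ on $K[x,y]$, apply Rentschler's theorem, lift the resulting triangular automorphism uniquely to $K\langle X,Y\rangle$ (via the isomorphism of the automorphism groups), and identify that lift with the exponential of $\Delta$. The only cosmetic difference is the final step: the paper recovers $\Delta$ from the single automorphism $\exp(\Delta)$ via the logarithm $\Delta=\log(1+\varTheta)$, whereas you run the one-parameter flow $\exp(t\Delta)$ and compare coefficients of $t$; both are valid, and you additionally spell out the verification $\Delta(C)\subseteq C$ that the paper labels as clear.
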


\begin{proof} Let $\Delta$ be a locally nilpotent derivation of $K\langle X,Y\rangle$. Clearly, $\Delta$ induces a locally nilpotent derivation $\delta$ of $K[x,y]$.
By the theorem of Rentschler~\cite{R}, $K[x,y]$ has a system of generators~$u$,~$v$ such that $\delta(u)=0$, $\delta(v)=f(u)$ for some $f(u)\in K[u]$.

As was mentioned above this pair of generators can be uniquely lifted to the pair $U$, $V$ of generators of $K\langle X,Y\rangle$.

Let us consider the automorphisms
\[
\varPhi=\exp(\Delta)\in \operatorname{Aut}K\langle X,Y\rangle=\operatorname{Aut}K\langle U,V\rangle
\]
and
\[
\varphi=\exp(\delta) =1+\frac{\delta}{1!}+\frac{\delta^2}{2!}+\cdots\in\operatorname{Aut}K[x,y]=\operatorname{Aut}K[u,v].
\]
Then
\[
\varphi\colon \ u\to u,\qquad \varphi\colon \ v\to v+f(u).
\]

From the uniqueness mentioned in Section~\ref{section2}
\[
\varphi(u) = u,\qquad \varphi(v) = v+f(u)
\]
implies $\varPhi(U) = U$, $\varPhi(V) = V+f(U)$. Since $\varPhi=\exp(\Delta)=1+\varTheta$, where
\[
\varTheta=\frac{\Delta}{1!}+\frac{\Delta^2}{2!}+\cdots
\]
and $\varTheta^n(S)=0$ for $S\in K\langle X,Y\rangle$ and a sufficiently large $n$, we have that
\[
\Delta=\log(1+\varTheta)=\frac{\varTheta}{1}-\frac{\varTheta^2}{2}+\frac{\varTheta^3}{3}-\cdots
\]
and $\varPhi$ determines uniquely the lnd $\Delta$. Hence $\Delta(U) = 0$, $\Delta(V) = f(U)$.
\end{proof}

Another difference between the locally nilpotent derivations of $K[x,y]$ and $K\langle X,Y\rangle$ is that in the latter case they can be distinguished by their algebras of constants.

\begin{Theorem}\label{derivations with the same kernel}
Let $\Delta_1$ and $\Delta_2$ be two non-zero locally nilpotent derivations of $K\langle {X,Y}\rangle$.\
Then~$\Delta_1$ and~$\Delta_2$ have the same algebras of constants if and only if $\Delta_2=\alpha\Delta_1$ for a nonzero $\alpha\in K$.
\end{Theorem}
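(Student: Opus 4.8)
The plan is to prove the nontrivial direction: if $\Delta_1$ and $\Delta_2$ share the algebra of constants $\mathcal{A}:=K\langle X,Y\rangle^{\Delta_1}=K\langle X,Y\rangle^{\Delta_2}$, then $\Delta_2=\alpha\Delta_1$ for some nonzero $\alpha\in K$; the converse is immediate since $\ker(\alpha\Delta_1)=\ker\Delta_1$. First I would normalize $\Delta_1$ using Proposition~\ref{similarity}: there is a free generating pair $U$, $V$ with $\Delta_1(U)=0$, $\Delta_1(V)=f(U)$, and if $\Phi$ is the automorphism sending $X\mapsto U$, $Y\mapsto V$, then $\Phi^{-1}\Delta_1\Phi$ kills $X$ and sends $Y\mapsto f(X)$. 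Replacing each $\Delta_i$ by $\Phi^{-1}\Delta_i\Phi$ preserves both the hypothesis and the conclusion, because $\ker(\Phi^{-1}\Delta_i\Phi)=\Phi^{-1}(\ker\Delta_i)$ and $\Delta_2=\alpha\Delta_1$ is equivalent to $\Phi^{-1}\Delta_2\Phi=\alpha\,\Phi^{-1}\Delta_1\Phi$. So I may assume $\Delta_1(X)=0$ and $\Delta_1(Y)=F:=f(X)$ with $0\ne f\in K[X]$.

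Next I would rigidify $\Delta_2$. Since $X\in\mathcal{A}$ we have $\Delta_2(X)=0$, so $\Delta_2$ is determined by $G:=\Delta_2(Y)$. By Lemma~\ref{commutator goes to 0}, $T_1=[Y,X]\in\mathcal{A}=\ker\Delta_2$, and applying $\Delta_2$ gives $0=\Delta_2([Y,X])=[G,X]$, i.e.\ $G$ commutes with $X$. The centralizer of $X$ in $K\langle X,Y\rangle$ is exactly $K[X]$: writing $G=c(X)+G'$ with $G'$ a combination of monomials each containing at least one $Y$, one checks that the component of $[X,G']$ with maximal left $X$-exponent equals $X$ times the top part of $G'$ and cannot be cancelled, so $[X,G]=0$ forces $G'=0$. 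Hence $G=g(X)$ for some $0\ne g\in K[X]$, and now \emph{both} derivations are triangular in the same coordinates: $\Delta_1(Y)=f(X)$, $\Delta_2(Y)=g(X)$. It remains to show $g=\alpha f$.

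For this I would invoke the second universal constant $T_2=YT_1F-FT_1Y$. A direct computation using $\Delta_1(X)=0$ gives $\Delta_1(T_2)=FT_1F-FT_1F=0$, so $T_2\in\ker\Delta_1=\mathcal{A}=\ker\Delta_2$. Applying $\Delta_2$ to the same expression, with $\Delta_2(X)=0$, $\Delta_2(T_1)=0$, $\Delta_2(Y)=g$, yields $0=\Delta_2(T_2)=g(X)\,T_1\,f(X)-f(X)\,T_1\,g(X)$. Writing $f=\sum_i a_iX^i$, $g=\sum_j b_jX^j$ and expanding $T_1=YX-XY$, this identity read off on the basis $\{X^pYX^q\}$ of the $Y$-linear part becomes the system $c_{p-1,q}=c_{p,q-1}$ for all $p,q$, where $c_{pq}:=a_pb_q-a_qb_p$. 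These relations make $c_{pq}$ constant on each anti-diagonal $p+q=s$; together with the boundary values $c_{s,0}=c_{0,s}=0$ and the antisymmetry $c_{pq}=-c_{qp}$ they force every $c_{pq}=0$, i.e.\ $a_pb_q=a_qb_p$. Thus $g=\alpha f$ for a nonzero scalar $\alpha$, and since $\Delta_2$ and $\alpha\Delta_1$ agree on $X$ and $Y$ they coincide.

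The main obstacle is conceptual and lives in the second paragraph. In the commutative setting two locally nilpotent derivations with equal kernels need not be proportional, precisely because one may form $w\delta$ with $w\in\ker\delta$; the decisive feature here is that $w\delta$ fails to be a derivation unless $w$ is central, and the mechanism enforcing this is exactly that the operator $[X,\,\cdot\,]$ has kernel only $K[X]$. So the real insight is to realize that just the two constants $T_1$ and $T_2$ already suffice: $T_1$ collapses $\Delta_2$ into the same triangular shape as $\Delta_1$, and the single scalar identity coming from $T_2$ then forces proportionality. I expect the only places needing care are the bilinear bookkeeping in $g\,T_1f=f\,T_1g$ and the elementary verification that the centralizer of $X$ is $K[X]$.
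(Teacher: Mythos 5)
Your proof is correct and follows essentially the same route as the paper: normalize $\Delta_1$ via Proposition~\ref{similarity}, use $T_1$ to force $\Delta_2(X)=0$ and $\Delta_2(Y)=g(X)$, then use $T_2=YT_1F-FT_1Y$ to conclude $gT_1f=fT_1g$ and hence $g=\alpha f$. You additionally spell out two steps the paper leaves implicit --- that the centralizer of $X$ in $K\langle X,Y\rangle$ is $K[X]$, and the bilinear bookkeeping showing $gT_1f=fT_1g$ forces proportionality --- and both verifications are sound.
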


\begin{proof} Changing the generators of $K\langle X,Y\rangle$, by Proposition~\ref{similarity} we may assume that \mbox{$\Delta_1(X){=}0$}, $\Delta_1(Y)=f(X)=F$ for some nonzero $F=f(X)\in K\langle X,Y\rangle$. Since $K\langle X,Y\rangle^{\Delta_1}=K\langle X,Y\rangle^{\Delta_2}$ we have that $\Delta_2(X) = 0$. By Lemma~\ref{commutator goes to 0}
\[
\Delta_2(T_1) = [\Delta_2(Y),X]+[Y,\Delta_2(X)] = [\Delta_2(Y),X] = 0.
\]
Therefore $\Delta_2(Y)=g(X)= G$. A direct computation gives that
\[
T_2 = YT_1F - FT_1Y \in K\langle X,Y\rangle^{\Delta_1}.
\]
Hence $\Delta_2(T_2)=GT_1F - FT_1G = g(X)T_1f(X)-f(X)T_1g(X) = 0$ which implies that $g(x)=\alpha f(x)$ for some $\alpha\in K$. Therefore $\Delta_2=\alpha \Delta_1$. Since $\Delta_1,\Delta_2\not=0$, we obtain that $\alpha\not=0$.
\end{proof}

\section[Algebras of constants of derivations of $K\langle X,Y\rangle$]{Algebras of constants of derivations of $\boldsymbol{K\langle X,Y\rangle}$}

By Proposition \ref{similarity}, up to a change of the free generators of $K\langle X,Y\rangle$ every nontrivial locally nilpotent derivation $\Delta$ of $K\langle X,Y\rangle$ is of the form
\[
\Delta(X)=0,\qquad \Delta(Y)=f(X),
\]
where $0\not=f(x)\in K[x]$. In the sequel we shall fix $\deg(f)=m\geq 0$ and $\Delta$ as defined above.

\begin{Proposition}\label{AK is generated by commutator}
$\text{AK}(K\langle X,Y\rangle) = K[T_1]$.
\end{Proposition}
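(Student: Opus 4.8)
The inclusion $K[T_1]\subseteq\text{AK}(K\langle X,Y\rangle)$ is immediate from Lemma~\ref{commutator goes to 0}: every lnd annihilates $T_1$, each kernel is a subalgebra and hence contains all of $K[T_1]$, and the same is then true of their intersection. The work is the reverse inclusion, and my plan is to produce a short explicit list of locally nilpotent derivations whose common kernel is already $K[T_1]$. Since $\text{AK}(K\langle X,Y\rangle)$ lies in the kernel of every single lnd, it lies in the common kernel of the chosen list, so if that common kernel equals $K[T_1]$ the inclusion $\text{AK}(K\langle X,Y\rangle)\subseteq K[T_1]$ follows at once.

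For the list I would take the two formal partial derivatives $\partial_X$ and $\partial_Y$ (given on generators by $\partial_X(X)=1$, $\partial_X(Y)=0$ and $\partial_Y(X)=0$, $\partial_Y(Y)=1$) together with $\Delta\colon X\mapsto 0$, $Y\mapsto X$ and $\Delta'\colon X\mapsto Y$, $Y\mapsto 0$; all four are readily seen to be locally nilpotent. The role of the pair $\Delta,\Delta'$ is to control bidegrees: a direct computation on generators gives $[\Delta,\Delta'](X)=X$ and $[\Delta,\Delta'](Y)=-Y$, so $H:=[\Delta,\Delta']$ multiplies a monomial with $a$ occurrences of $X$ and $b$ occurrences of $Y$ by $a-b$, and $\{\Delta,\Delta',H\}$ is an $\mathfrak{sl}_2$-triple. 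Hence any $w$ with $\Delta(w)=\Delta'(w)=0$ satisfies $H(w)=\Delta\Delta'(w)-\Delta'\Delta(w)=0$, i.e.\ $w$ is a sum of bihomogeneous pieces of equal degree in $X$ and $Y$. As $\partial_X,\partial_Y,\Delta,\Delta'$ are all homogeneous for the bigrading, I may pass to one bihomogeneous component and assume $w$ has bidegree $(p,p)$.

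It then remains to prove the core statement: a bihomogeneous $w$ of bidegree $(p,p)$ annihilated by $\partial_X$, $\partial_Y$, $\Delta$ and $\Delta'$ is a scalar multiple of $T_1^{\,p}$. I would induct on $p$ (the cases $p=0,1$ being trivial) using the leading-monomial order $Y\gg X>1$, for which $\overline{T_1}=YX$ and $\overline{T_1^{\,p}}=(YX)^p$. Granting that $\overline{w}=(YX)^p$ for every nonzero such $w$, one subtracts the appropriate scalar multiple of $T_1^{\,p}$: the difference is again annihilated by all four derivations, lies in bidegree $(p,p)$, and has strictly smaller leading monomial, so by Noetherian induction on monomials it vanishes, giving $w\in KT_1^{\,p}$.

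The heart of the argument, and the step I expect to be the main obstacle, is exactly the claim that no common constant of bidegree $(p,p)$ can have leading monomial larger than $(YX)^p$. This fails for $\mathfrak{sl}_2$-invariance alone: the constant $T_2=\boxdot(T_1)=YT_1X-XT_1Y=Y^2X^2-YXYX-XYXY+X^2Y^2$ of $\Delta$ is invariant with $\overline{T_2}=Y^2X^2\gg(YX)^2$, and it is precisely $\partial_Y$ that excludes it, since $\partial_Y(T_2)\neq 0$. Thus one must show that $\partial_X(w)=\partial_Y(w)=0$ rules out any leading monomial beginning with two or more consecutive $Y$'s; equivalently, the space of $\mathfrak{sl}_2$-invariants of bidegree $(p,p)$ has dimension the Catalan number $C_p$, and the content of the proposition is that the two partial derivatives collapse this space onto the single line $KT_1^{\,p}$. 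Carrying out this rank computation, by induction on $p$ while tracking how $\partial_X,\partial_Y$ act on leading terms, is where the real work lies. Should the four derivations fail to suffice at some degree, the family can be enlarged by the further lnds $Y\mapsto X^k$ without disturbing any preceding reduction, since these only shrink the common kernel.
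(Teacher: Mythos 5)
Your easy inclusion and your overall strategy (exhibit an explicit family of lnds whose common kernel is already $K[T_1]$) are sound, and the paper follows the same strategy. But the ``core statement'' you reduce everything to is not merely unproven --- it is false. The element
\[
P=[[T_1,X],[T_1,Y]]=[[[Y,X],X],[[Y,X],Y]]
\]
is a nonzero element of bidegree $(3,3)$ annihilated by all four of your derivations, yet it does not lie in $K[T_1]$. Indeed, $\partial_X$ and $\partial_Y$ kill every Lie monomial of degree $\geq 2$ (a derivation replaces one letter at a time by its image, and a commutator with an entry equal to $0$ or $1$ vanishes), so they kill $P$; and writing $u=[T_1,X]$, $v=[T_1,Y]$ one checks $\Delta(u)=0$, $\Delta(v)=u$, $\Delta'(u)=v$, $\Delta'(v)=0$, whence $\Delta(P)=[u,u]=0$ and $\Delta'(P)=[v,v]=0$. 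Moreover $P\neq0$ (two non-proportional elements of a free Lie algebra never commute; concretely $\overline{P}=Y^2XYX^2$), while $\overline{T_1^3}=(YX)^3=YXYXYX$, so $P\notin KT_1^3$. Thus your claim that $\partial_X$, $\partial_Y$ collapse the $C_p$-dimensional space of $\mathfrak{sl}_2$-invariants onto the line $KT_1^p$, and your leading-monomial bound $\overline{w}=(YX)^p$, both fail already at $p=3$. (Consistently with the proposition itself, $P\notin\text{AK}(K\langle X,Y\rangle)$: the lnd $X\mapsto 0$, $Y\mapsto X^2$ does not kill $P$ --- which shows precisely that your finite list of ``small'' derivations cannot suffice.)

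Your fallback remark --- enlarge the family by $Y\mapsto X^k$ --- points at the correct fix but carries no argument, and that is where the entire content of the paper's proof lies. The paper uses the whole family $\delta_m\colon X\mapsto 0$, $Y\mapsto X^m$, $m\geq 0$, and shows its common kernel is the subalgebra $K\langle X,T_1\rangle$ by induction on $\deg_Y$ and $\deg_X$: writing a common constant as $P=XP_0+YP_1$ forces $\delta_m(P_1)=0$; then, writing inductively $P_1=XP_{10}+T_1P_{11}$, the leading monomial of $X^mT_1P_{11}$ cannot be cancelled once $m$ is sufficiently large, so $P_{11}=0$. It is exactly the freedom to let $m\to\infty$ that replaces the rank computation you were unable to carry out, and which no fixed finite subfamily provides. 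Finally, by the $X\leftrightarrow Y$ symmetry,
\[
\text{AK}(K\langle X,Y\rangle)\subseteq K\langle X,T_1\rangle\cap K\langle Y,T_1\rangle=K[T_1],
\]
which completes the paper's argument. To salvage your write-up you would have to prove the analogous statement for your enlarged family, i.e., essentially redo this induction; as it stands, the proposal has a genuine gap at its central step.
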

\begin{proof}
Let us consider derivations
\[
\delta_m\colon \ \delta_m(X) = 0, \qquad\delta_m(Y) = X^m.
\]
Suppose $\delta_m(P) = 0$ for all $m$. We may assume that $P$ is homogeneous relative to~$X$ and $Y$. Write $P = XP_0 + YP_1$, then
\[
0 = \delta_m(P) = X\delta_m(P_0) + X^mP_1+ Y\delta_m(P_1).
\]
Hence $\delta_m(P_1) = 0$ and we can assume by induction on $\deg_Y$ that $P_1$ belongs to the subalgebra $K\langle X, T_1\rangle$ of $K\langle X,Y\rangle$ generated by $X$ and $T_1$ and write $P_1 = XP_{10} + T_1P_{11}$. If $P_{11} \neq 0$ then $\overline{X^mT_1P_{11}}$ cannot be canceled by any monomial of $X\delta_m(P_0)$ if $m$ is sufficiently large. Hence $P_{11} = 0$ and $P_{10} \in K\langle X, T_1\rangle$. Therefore
\[
P = XP_0 + YXP_{10} =XP_0 + T_1P_{10} + XYP_{10} = X(P_0 + YP_{10}) + T_1P_{10}.
\]
Then $\delta_m(P_0 +YP_{10}) = 0$ because $T_1P_{10} \in K\langle X,T_1\rangle$ and we can assume by induction on $\deg_X$ that $P_0 + YP_{10} \in K\langle X, T_1\rangle$, i.e., $P \in K\langle X, T_1\rangle$. Of course
\[
\text{AK}(K\langle X,Y\rangle) \subseteq K\langle X, T_1\rangle\bigcap K\langle Y, T_1\rangle= K[T_1]
\]
since we can switch $X$ and $Y$.
\end{proof}

Consider the operator $\boxdot$ on $K\langle X,Y\rangle$ defined in Section~\ref{section2}. We shall prove in this section that the algebra of constants of $\Delta$ is the minimal algebra $R_F$ which contains $K\langle X,T_1\rangle$ and is closed under this operator. Since $\boxdot\Delta = \Delta\boxdot$ it is clear that $R_F \subseteq K\langle X,Y\rangle^{\Delta}$. It is worth observing that the kernel of~$\boxdot$ is $K[Y]$ if $\deg_X(F) = 0$ and $0$ if $\deg_X(F) > 0$ and that $\deg(\boxdot(A)) = \deg(A)$
(where $\deg$ is the degree function induced by $\Delta$) if $\deg_X(F) > 0$. We shall also denote~$\boxdot(A)$ by~$\{A\}$. This bracketing is a bit unusual since
$\boxdot^n(A)$ will be recorded as $\{\{\ldots\{A\}\ldots\}\}$ with the same number $n$ of the left and right brackets and there can be more than two terms inside of a~pair of brackets, but as in the ordinary bracketing in a~configuration of three brackets like this $\{A_1\{A_2\}$ the first bracket cannot match the third bracket, it should be matched by a bracket $\}$ to the right of the third bracket and second and third brackets are matched.

\begin{Theorem}\label{preparing of freedom}Let $L\in K\langle X,Y\rangle$. If $\Delta^n(L) = 0$ then $L$ belongs to the linear span $R_F^n$ of elements $A_1YA_2Y\cdots YA_k$, where $k \leq n$ and each $A_i$, $1 \leq i\leq k$, is a monomial from $R_F$, endowed with an arbitrary number of matching pairs of brackets $\{\}$.
\end{Theorem}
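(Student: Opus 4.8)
The plan is to exploit the grading of $K\langle X,Y\rangle$ by $Y$-degree together with leading-monomial elimination against the spanning set of $R_F^n$. First I would pass to the $Y$-homogeneous case: since $\Delta$ carries the component of $Y$-degree $j$ into the component of $Y$-degree $j-1$, the hypothesis $\Delta^n(L)=0$ splits into $\Delta^n(L_{(j)})=0$ on each homogeneous component, and every bracketed word is $Y$-homogeneous (because $T_1$ has $Y$-degree $1$ and $\boxdot$ raises $Y$-degree by $1$), so $R_F^n$ is $Y$-graded and it suffices to treat a $Y$-homogeneous $P$ of $Y$-degree $q$ with $\Delta^n(P)=0$. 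Observe that such a $P$ lies in $R_F^{q+1}$ for free, since each monomial $X^{c_0}YX^{c_1}\cdots YX^{c_q}$ is a bracket-free word with the $q+1$ factors $X^{c_i}\in R_F$; the real assertion of the theorem is therefore the \emph{sharp} count, that $q+1$ may be lowered to $\deg(P)+1\le n$.

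The governing invariant is the substitution $\Phi\colon Y\mapsto F$. For $Y$-homogeneous $P$ of degree $q$ one has $\Delta^q(P)=q!\,\Phi(P)$, and since $\Phi(P)=\big(\sum_M c_M X^{\deg_X M}\big)f(X)^q$, the identity $\Phi(P)=0$ holds exactly when the coefficients of $P$ sum to zero in each $X$-degree, which is equivalent to $\deg(P)<q$. I would also record the bookkeeping that makes elimination legitimate: $\deg$ is additive on products, $\deg(Y)=1$, every factor lies in $R_F\subseteq\ker\Delta$, and $\boxdot$ preserves $\deg$ when $\deg_X F\ge 1$; hence a bracketed word with $k$ factors has $\deg=k-1$ and $R_F^n\subseteq\ker\Delta^n$. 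When $\Phi(P)\ne0$ we get $\deg(P)=q$ and the trivial $q+1$ factors already satisfy $n\ge q+1$, so all the difficulty is concentrated in the regime $\Phi(P)=0$, where the leading monomial carries strictly more $Y$'s than $\deg(P)$ allows and brackets become indispensable.

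The engine is Noetherian induction on the leading monomial $\overline P$ with respect to $Y\gg X$: I would produce $B\in R_F^{\deg(P)+1}$ with $\overline B=\overline P$ and equal leading coefficient, note $B\in\ker\Delta^n$ so that $\Delta^n(P-B)=0$, and finish because $\overline{P-B}<\overline P$. The bracket law $\overline{\boxdot(A)}=Y\,\overline A\,X^m$ (valid since $YAF>FAY$ under $Y\gg X$) is what lets a reducer gain $Y$-degree in its leading monomial without spending a new factor: each pair of brackets prepends a $Y$ and appends an $X^m$. The main obstacle is precisely the construction of this $B$: one must realize $N=\overline P=X^{e_0}YX^{e_1}\cdots YX^{e_q}$ using at most $\deg(P)+1$ factors, that is, fold the surplus $q-\deg(P)$ of its $Y$'s into nested brackets, each matched with a trailing $X^m$ in the correct slot of $N$. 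This is a Dyck-type admissibility condition on $N$, and the heart of the theorem is that the cancellation encoded by $\deg(P)<q$ forces $N$ to admit such a parenthesization.

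To attack the obstacle I would run an inner induction that decreases $q-\deg(P)$. If $\overline P$ begins with $X$ then so does every monomial of $P$, whence $P=XQ$ and any realization of $\overline Q$ extends by absorbing the leading $X$ into the first factor. If $\overline P$ begins with $Y$, I would locate the trailing $X^m$ matched to that outermost $Y$ and peel one bracket, subtracting $\boxdot(B')$ where $B'$ realizes the enclosed submonomial; here $\boxdot$ preserving $\deg$ keeps $\deg(B')=\deg(P)$ while the $Y$-degree drops by one, feeding the inner induction. As a baseline I would use that $\ker\Phi$ in $Y$-degree $q$ is spanned by the commutators $U T_1 V$ with $U,V$ monomials and $\deg_Y(UV)=q-1$, each of which collects its surrounding $X$-powers into $X^{a}T_1X^{b}\in R_F$ and so lies in $R_F^q$; this guarantees a realization exists before folding. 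The delicate point, and where I expect the genuine difficulty to lie, is verifying that the trailing $X^m$ always sits exactly where the peeling step needs it, which is the step that consumes the cancellation hypothesis $\deg(P)<q$. Finally the degenerate case $\deg_X F=0$ (so $F\in K^{\ast}$, $\boxdot(A)=c\,[Y,A]$ and $\ker\boxdot=K[Y]$) I would dispose of separately, reading $X^m$ as the empty word; the leading-monomial law and the substitution criterion survive and the same scheme goes through.
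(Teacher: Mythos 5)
Your reductions are sound and are genuinely different in spirit from the paper's proof: the passage to $Y$-homogeneous components, the identity $\Delta^q(P)=q!\,\Phi(P)$ for the substitution $\Phi\colon Y\mapsto F$, the equivalence $\Phi(P)=0\Leftrightarrow\deg(P)<q$, and the observation that everything reduces to producing a reducer $B\in R_F^{\deg(P)+1}$ with $\overline B=\overline P$ are all correct (as is the base case $q-\deg(P)=1$ via the spanning of the bihomogeneous kernel of $\Phi$ by the elements $UT_1V$). But the proposal stops exactly where the theorem lives: your inner induction on $q-\deg(P)$ does not close. The inductive hypothesis asserts realizability of leading monomials \emph{of constants} (or of kernel elements of $\Delta^n$) with smaller defect, whereas the peeling step hands you a bare monomial $N'$ with no kernel element attached whose leading monomial it is; nothing in the argument transports the cancellation hypothesis $\Phi(P)=0$ across the peeling step, so realizability of $N'$ is simply assumed. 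Since ``every leading monomial of a constant admits a suitable bracketing'' is equivalent to the theorem itself, the argument is circular at the point you yourself flag as delicate.

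Moreover, the case analysis of the peeling step is wrong even in shape. Take $F=X$ (so $m=1$) and $P=T_1^2$: here $q=2$, $\deg(P)=0$, and $\overline P=YXYX$ begins with $Y$ and ends with $X^m$, so your recipe strips the outer $Y$ and trailing $X$ and asks for a realization of the inner monomial $XY$; but $XY$ is not the leading monomial of any constant (the bihomogeneous kernel in bidegree $(1,1)$ is $KT_1$, whose leading monomial is $YX$), so no $B'$ exists --- while the correct realization of $\overline P$ is the bracket-free product $T_1\cdot T_1$. So at minimum you need a concatenation case (monomials starting with $Y$ need not be of the form $\overline{\boxdot(\nu)}$), which is precisely the trichotomy in the paper's Lemma~\ref{homogeneous free generating set} (starts with $X$ / splits as a product / is a full bracket, with subcases on where the $X^m$ sits); and even then that lemma is proved only for monomials already known to come from $R_{X^m}$, not for leading monomials of arbitrary constants. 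The paper's proof of Theorem~\ref{preparing of freedom} avoids this issue entirely by operating on the constants themselves rather than on their leading monomials: for $m>0$ it inducts on the weight $w(X)=1$, $w(Y)=m$, writes $L=L_mF+\sum_{i<m}L_iYX^i$, reduces to $\widetilde L=kL_mF-L_m'Y$, and extracts brackets via the commutation identity $\bigtriangledown_r(\boxdot(V_2YU_2))=\boxdot(\bigtriangledown_r(V_2YU_2))-\bigtriangledown_l(V_2YU_2)[Y,F]$; the case $m=0$ is done by the explicit decomposition $S=\sum S_jY^j$ with $S_j\in K\langle X,T_1,T_2,\ldots\rangle$, not by reading $X^m$ as the empty word. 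To repair your route you would need an independent, purely combinatorial characterization of realizable monomials together with a proof that $\Phi(P)=0$ forces $\overline P$ to satisfy it --- that missing implication is the actual content of the theorem.
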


\begin{proof} We consider two cases separately.

(a) $m = 0$ (we can assume that $\Delta(Y) = 1$). Consider the sequence of elements $T_1, \ldots, T_i, \ldots$ defined in Section~\ref{section2} by $T_1= YX - XY$, $T_{i+1} = \boxdot^i(T_1)$. In this case $\overline{T_i} = Y^iX$ and any element $S \in K\langle X,Y\rangle$ can be written as $S = \sum\limits_{j=0}^k S_jY^j$ where $S_j \in K\langle X,T_1, \ldots, T_i, \ldots\rangle$. Since $\Delta(S) = \sum\limits_{j=0}^k jS_j Y^{j-1}$, $\Delta^n(S) = 0$, and $\Delta^k(S) \neq 0$ if $S_k \neq 0$ it is clear that $k < n$.

(b) $m > 0$. Let us introduce a weight degree function on $K\langle X,Y\rangle$ by $w(X) =1$, $w(Y) = m$. Then the space $V_N$ spanned by monomials of the weight not exceeding $N$ is mapped by the derivation into itself. We proceed by induction on $w(S)$. If $w(S)$ is sufficiently small, say does not exceed $m$, the claim is obvious. Assume that for the weight less than $N$ the claim is true.

Take an $L$ for which $w(L) = N$ and $L^{(k)} = 0$ (here and further on $L^{(k)}$ denotes $\Delta^{k}(L)$). We can assume that $L(X, 0) = 0$ and write
\[
L =L_mF + \sum_{i=0}^{m-1}L_iYX^i.
\]
Then
\[
L^{(k)}_mF + k\sum_{i=0}^{m-1} L_i^{(k-1)}X^iF + \sum_{i=0}^{m-1}L_i^{(k)}YX^i = 0.
\]
Hence $L_i^{(k)} = 0$ for $i < m$ and
\[
\left(L_m' + k\sum_{i=0}^{m-1} L_iX^i\right)^{(k-1)} = 0.
\]
Therefore $\widehat{L}^{(k)} = 0$ for $\widehat{L} = L_mF + \sum\limits_{i=0}^{m-1}L_iX^iY$.

It is sufficient to check the claim for $\widehat{L}$ since $L-\widehat{L} =\sum\limits_{i=0}^{m-1}L_i[Y,X^i]$ satisfies the claim by induction ($w(L_i) < N$ and $[Y, X^i] \in R_F$).

Write $\widehat{L} = L_mF + H_0Y$. Then $H_0^{(k)} = 0$ and $(L_m' + kH_0)^{(k-1)} =0$. Hence $L_m^{(k+1)} = 0$ and $\widetilde{L}^{(k)} = 0$ for $\widetilde{L} =kL_mF - L_m'Y$. It is sufficient to check the claim for $\widetilde{L}$ since $k\widehat{L} - \widetilde{L} = (kH_0 + L_m')Y$ and $kH_0 + L_m'$ satisfy the claim by induction.

Since $L_m^{(k+1)} = 0$ and $w(L_m) < N$ we can write
\[
L_m = \sum_{\textbf{j}}\alpha_{j_0}Y\alpha_{j_1}Y \cdots Y\alpha_{j_k} + S,
\]
where $\alpha_{j_i} \in R_F$, the summands are endowed with brackets $\{\}$, and $S$ is the sum of terms in which $Y$ appears less than $k$ times. We can omit $S$ since $kSF - S'Y \in R^k_F$.

Take one of the summands $\mu_{\textbf{j}}$ and consider $\nu_{\textbf{j}} =k\mu_{\textbf{j}} F - \mu_{\textbf{j}}'Y$. Since $\Delta$ and $\boxdot$ commute
\[
\nu_{\textbf{j}} = k\mu_{\textbf{j}} F - \sum_{i=1}^k \alpha_{j_0}Y\alpha_{j_1}Y \cdots \alpha_{j_{i-1}}F\alpha_{j_i} Y \cdots Y\alpha_{j_k}Y,
\]
where each term $\alpha_{j_0}Y\alpha_{j_1}Y \cdots \alpha_{j_{i-1}}F\alpha_{j_i} Y \cdots Y\alpha_{j_k}Y$ has the same bracketing as $\mu=\mu_{\textbf{j}}$.

Consider $P_i = \mu F - \alpha_{j_0}Y\alpha_{j_1}Y \cdots \alpha_{j_{i-1}}F\alpha_{j_i}Y \cdots Y\alpha_{j_k}Y$. It is clear that $P_i^{(k)} = 0$ so we should check that $P_i$ can be recorded as a sum of terms containing only $k-1$ entries of $Y$ (we do not count $Y$'s appearing in $\boxdot$).

Write $\mu = V_1YU_1$ where $Y$ is the one which is replaced by $F$ in $P_i$ and introduce two operations:
\[
\bigtriangledown_{r,U}(V_1YU_1) = V_1YU_1UF - V_1FU_1UY\qquad \text{and} \qquad \bigtriangledown_{l,U}(V_1YU_1) = FUV_1YU_1 - YUV_1FU_1.
\]
We shall write $\bigtriangledown_r$ and $\bigtriangledown_l$ when $U = 1$, so $P_i = \bigtriangledown_r(V_1YU_1)$.

The operator $\boxdot$ is defined on all algebra while the operations $\bigtriangledown_{r,U}$, $\bigtriangledown_{l,U}$ are defined only on specially recorded elements and their extension does not seem to be canonical.

Assume that $V_1YU_1 = \boxdot(V_2YU_2)$. Then we need to simplify $\bigtriangledown_r(\boxdot(V_2YU_2))$. In order to do this let us compute $[\bigtriangledown_r,\boxdot](V_2YU_2)$.

This is a bit tedious but not difficult:
\begin{gather*}
\bigtriangledown_r(\boxdot(V_2YU_2)) = [Y(V_2YU_2)F - F(V_2YU_2)Y]F -[Y(V_2FU_2)F - F(V_2FU_2)Y]Y,\\
\boxdot(\bigtriangledown_r(V_2YU_2)) = Y[(V_2YU_2)F - (V_2FU_2)Y]F -F[(V_2YU_2)F - (V_2FU_2)Y]Y.
\end{gather*}
Hence
\begin{gather*}
[\bigtriangledown_r, \boxdot](V_2YU_2) = - F(V_2YU_2)YF +F(V_2YU_2)FY - Y(V_2FU_2)FY + Y(V_2FU_2)YF\\
\hphantom{[\bigtriangledown_r, \boxdot](V_2YU_2) =}{}
=[Y(V_2FU_2) - F(V_2YU_2)][Y,F] = -\bigtriangledown_l(V_2YU_2)[Y,F].
\end{gather*}
Therefore{\samepage
\[
\bigtriangledown_r(\boxdot(V_2YU_2)) =
\boxdot(\bigtriangledown_r(V_2YU_2)) -\bigtriangledown_l(V_2YU_2)[Y,F].
\]
Since $w(V_2YU_2) < w(V_1YU_1)$ we can apply induction.}

Assume now that either $\mu = V\boxdot(V_1YU_1)$ or $\mu =\boxdot(V_1YU_1)U$. If $\mu = V\boxdot(V_1YU_1)$ then $\bigtriangledown_r(V\boxdot(V_1YU_1)) = V\bigtriangledown_r(\boxdot(V_1YU_1))$. If $\mu = \boxdot(V_1YU_1)U$ then $\bigtriangledown_r(\mu) = \bigtriangledown_{r,U}( \boxdot(V_1YU_1))$. Now,
\[
[\bigtriangledown_{r,U},\boxdot](V_1YU_1) =
\boxdot[\bigtriangledown_r(V_1YU_1)U - \bigtriangledown_{r,U}(V_1YU_1)] -
\bigtriangledown_l(V_1YU_1)\boxdot(U)
\]
and induction can be applied in these cases as well.

The last case is when $Y$ does not belong to a bracketed subword. Then $\mu =V_1YU_1$ and $\bigtriangledown_r(\mu) = V_1\boxdot(U_1)$.

The proof is completed.
\end{proof}

\begin{Corollary}\label{description of algebra of constants}The algebra of constants $K\langle X,Y\rangle^{\Delta}$ coincides with the algebra $R_F$.
\end{Corollary}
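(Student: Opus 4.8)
The plan is to prove the equality by establishing the two inclusions $R_F \subseteq K\langle X,Y\rangle^{\Delta}$ and $K\langle X,Y\rangle^{\Delta} \subseteq R_F$ separately. The first inclusion is essentially already in hand from the discussion preceding Theorem~\ref{preparing of freedom}: the generators $X$ and $T_1$ of $K\langle X,T_1\rangle$ lie in $\ker\Delta$ by $\Delta(X)=0$ and Lemma~\ref{commutator goes to 0}, and since $\Delta$ commutes with $\boxdot$ the operator $\boxdot$ carries $\ker\Delta$ into itself. As $R_F$ is by definition the smallest subalgebra of $K\langle X,Y\rangle$ containing $K\langle X,T_1\rangle$ and closed under $\boxdot$, and $\ker\Delta$ is a subalgebra with both of these closure properties, we get $R_F \subseteq K\langle X,Y\rangle^{\Delta}$. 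So all the real work lies in the reverse inclusion.

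For the reverse inclusion I would simply specialize Theorem~\ref{preparing of freedom} to the case $n=1$. Any $L \in K\langle X,Y\rangle^{\Delta}$ satisfies $\Delta(L)=0$, that is $\Delta^{1}(L)=0$, so the theorem places $L$ in the linear span $R_F^{1}$ of elements $A_1YA_2Y\cdots YA_k$ with $k \le 1$, where each $A_i$ is a monomial of $R_F$ and the word carries an arbitrary number of matching bracket pairs $\{\,\}$. The constraint $k \le 1$ is the crux: it forces the word to have no free occurrence of $Y$, so it is a single monomial $A_1 \in R_F$ decorated only by bracket operations. Since each bracket pair is an application of $\boxdot$, and $R_F$ contains the $A_i$ and is closed under $\boxdot$, every such decorated monomial lies in $R_F$. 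Hence $R_F^{1} \subseteq R_F$ and therefore $L \in R_F$, giving $K\langle X,Y\rangle^{\Delta} \subseteq R_F$.

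The one place that demands attention—and which I regard as the main obstacle, modest as it is—is the correct reading of the bracketing and the degree bookkeeping that underlies $k \le 1$. As a sanity check for $m>0$, using the $\Delta$-degree from Section~\ref{section2} one has $\deg(A_i)=0$ since $A_i \in R_F \subseteq \ker\Delta$, while $\deg(Y)=1$, so by multiplicativity $\deg(A_1Y\cdots YA_k)=k-1$, counting only the free $Y$'s; the $Y$'s introduced inside brackets do not raise the degree because $\boxdot$ preserves the $\Delta$-degree when $\deg_X(F)>0$. Thus the condition $\Delta(L)=0$ matches exactly the bound $k-1=0$ coming from $n=1$. For $m=0$ the same bound is supplied directly by case~(a) of Theorem~\ref{preparing of freedom}, so no separate argument is needed. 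Once this interpretation is granted, the corollary is the clean $n=1$ specialization of the theorem, and combining the two inclusions yields $K\langle X,Y\rangle^{\Delta}=R_F$.
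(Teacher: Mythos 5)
Your proposal is correct and follows exactly the paper's own argument: the inclusion $R_F \subseteq K\langle X,Y\rangle^{\Delta}$ is quoted from the discussion preceding Theorem~\ref{preparing of freedom} (via $\boxdot\Delta=\Delta\boxdot$), and the reverse inclusion is the $n=1$ case of that theorem, where $k\le 1$ forces a constant $L$ to be a sum of bracketed monomials of $R_F$, which lie in $R_F$ by closure under $\boxdot$. Your additional degree bookkeeping is a harmless sanity check; the substance of the proof is identical to the paper's.
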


\begin{proof}As we already mentioned $R_F\subseteq K\langle X,Y\rangle^{\Delta}$ and it is sufficient to show that if $\Delta(L)=0$ for $L\in K\langle X,Y\rangle$, then $L$ belongs to $R_F$. But this is a direct consequence of the case $n=1$ in Theorem~\ref{preparing of freedom}.
\end{proof}

Now we are able to establish one of the main properties of the algebra of constants $K\langle X,Y\rangle^{\Delta}$.

\begin{Theorem}\label{free algebra of constants}The algebra of constants $K\langle X,Y\rangle^{\Delta}$ is a free algebra.
\end{Theorem}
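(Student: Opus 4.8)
The plan is to invoke Corollary~\ref{description of algebra of constants} to replace $K\langle X,Y\rangle^{\Delta}$ by the explicit algebra $R_F$, and then to prove freeness by the \emph{leading word} method with respect to the ordering $Y\gg X>1$ fixed in Section~\ref{section2}. For a subalgebra $B\subseteq K\langle X,Y\rangle$ write $\overline{B}=\{\overline{b}:0\neq b\in B\}$; since $\overline{b_1b_2}=\overline{b_1}\,\overline{b_2}$, this is a submonoid of the free monoid on $X,Y$. The reduction I would use is the elementary lemma: \emph{if $\overline{B}$ is a free submonoid with free basis $W$, then lifting each $w\in W$ to a monic $g_w\in B$ with $\overline{g_w}=w$ produces a free generating set of $B$}. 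Generation follows by repeatedly subtracting a suitable product $g_{w_{1}}\cdots g_{w_{k}}$ to lower the leading word and inducting; and any nontrivial relation among the $g_w$, after passing to leading words, would exhibit two distinct factorizations of one element of $\overline{B}$, contradicting freeness of the monoid. So the whole problem reduces to showing that $M:=\overline{R_F}$ is a free submonoid and to reading off its free basis.

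First I would settle the case $m=0$, where $F$ is a nonzero scalar and $\boxdot=\operatorname{ad}_Y$ is a genuine derivation. Then $R_F$ is just the subalgebra $K\langle X,T_1,T_2,\dots\rangle$ (the closure of $K\langle X,T_1\rangle$ under a derivation sending $X\mapsto T_1$ and $T_i\mapsto T_{i+1}$), and by case~(a) of Theorem~\ref{preparing of freedom} its leading words are $\overline{X}=X$ and $\overline{T_i}=Y^iX$. Thus $M$ is generated by the prefix code $\{X,YX,Y^2X,\dots\}$, which is therefore a free basis, and the lemma finishes this case at once with free generators $X,T_1,T_2,\dots$.

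The real content is the case $m\geq1$, where $\boxdot$ is no longer a derivation. Here $R_F$ is \emph{not} generated by $X$ and the $T_i$: using $\overline{\boxdot(A)}=Y\,\overline{A}\,X^m$ and $\overline{T_i}=Y^iX^{(i-1)m+1}$ one sees, for example, that $\overline{\boxdot(T_1^{2})}=Y^2XYX^{m+1}$ cannot be written as a product of the words $X$ and $\overline{T_i}$, so $\boxdot(T_1^2)$ is a new indecomposable constant. I would instead identify $M$ straight from the bracketed description of Theorem~\ref{preparing of freedom}: $M$ is the smallest submonoid containing $X$ and $YX$ and closed under the operation $\beta\colon w\mapsto YwX^m$ induced by $\boxdot$, and its free basis consists of the $\beta$-indecomposable words. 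The heart of the argument is to prove that every word of $M$ has a \emph{unique} factorization into indecomposables, and this is precisely the bracket-matching bookkeeping described before Theorem~\ref{preparing of freedom}: a word beginning with $X$ must start with the factor $X$ (both $YX$ and every $\beta(w)$ begin with $Y$), so $X$ is left-cancellable; while for a word beginning with $Y$ one must decide whether that initial $Y$ is the leading letter of a factor $YX=\overline{T_1}$ or the opening letter of a $\beta$-bracket, a choice determined, Dyck-word fashion, by matching each prepended $Y$ with its trailing block $X^m$. Running this matching as an induction on length shows $M$ is free, hence $R_F$, and with it $K\langle X,Y\rangle^{\Delta}$, is free, the free generators being the lifts of the $\beta$-indecomposable leading words.

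I expect this unique-factorization analysis for $m\geq1$ to be the main obstacle. Without a derivation property there is no shortcut, the free basis of $M$ is infinite and genuinely nested, and one has to exclude every ambiguous factorization coming from the interplay of the single letter $X$, the factor $YX$, and the trailing blocks $X^m$ created by $\boxdot$; making the bracket-matching rigorous (equivalently, verifying Schützenberger's stability criterion for $M$) is where the work lies.
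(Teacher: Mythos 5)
Your reduction via Corollary~\ref{description of algebra of constants}, your lifting lemma (a free basis of the monoid of leading words lifts to a free generating set), and your treatment of $m=0$ are all sound, and your observation that for $m\geq 1$ the algebra $R_F$ is not generated by $X$ and the $T_i$ is exactly right. But for $m\geq 1$ the proposal stops where the theorem actually lives: you identify the unique factorization of words of $M$ into $\beta$-indecomposables as ``the heart of the argument'' and then defer it (``where the work lies''). That statement is precisely the content of the paper's closing theorem -- the leading monomial of a permissible word determines its root and its bracketing, so $\overline{\mu}=(\overline{\nu_1})(\overline{\nu_2})$ is impossible -- and it is not a routine Dyck-matching exercise: for each block $Y^{b_i}X^{a_i}$ one must decide how many $Y$'s are bracket openers versus the $Y$ of a $T_1$, and how many $X$'s are absorbed into closing blocks $X^m$, which is exactly where the normalizations $j_s<m$ and ``no configuration $X\}$'' of Lemma~\ref{homogeneous free generating set} come from. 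A proposal that names this step but does not carry it out has not proved the theorem.

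There is also a gap you do not flag: the identification $\overline{R_F}=M$. Theorem~\ref{preparing of freedom} only gives that $R_F$ is \emph{spanned} by bracketed words, and the leading word of a linear combination need not be the leading word of any bracketed word, because distinct bracketed words can share a leading word and cancel; for instance $\boxdot(T_1X)$ and $\boxdot(T_1)X$ both have leading word $Y^2X^{m+2}$, and their difference is $FT_1^2$. So the inclusion $\overline{R_F}\subseteq M$, which your lemma needs both for generation and to know what $W$ is, requires its own argument; in effect it is equivalent to the generation statement of Lemma~\ref{homogeneous free generating set}, proved there by structural induction on bracketed words rather than read off from Theorem~\ref{preparing of freedom}. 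Finally, note that the paper proves the present theorem while avoiding all of this combinatorics: for $m>0$ it invokes Jooste's theorem~\cite{Jo} for the homogeneous derivation $\overline{\Delta}(X)=0$, $\overline{\Delta}(Y)=X^m$, whose kernel $R_{X^m}$ is $w$-graded and hence admits a homogeneous free generating set, and then transfers freeness to $R_F$ via the substitution $X^m\mapsto F$ inside brackets, using $w(\pi(F_i)-F_i)<w(F_i)$; the unique-factorization analysis appears only afterwards, to exhibit explicit generators, and is carried out in the homogeneous algebra $R_{X^m}$, where it is cleaner. If you want to complete your route, either cite~\cite{Jo} and homogenize as the paper does, or perform your monoid analysis in $R_{X^m}$ first and then transfer it to $R_F$.
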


\begin{proof}By Corollary \ref{description of algebra of constants} we may work with the algebra $R_F$ instead with $K\langle X,Y\rangle^{\Delta}$. When $m = 0$ we have seen (in the proof of Theorem~\ref{preparing of freedom}) that $R_1$ is generated by $X,T_1, T_2, \ldots$. Since $\overline{T_i} = Y^iX$ these elements freely generate $R_1$. For $m > 0$ producing a generating set is more involved but the freeness can be deduced from a theorem of de W.~Jooste~\cite{Jo}. It follows from his theorem that the kernel of the derivation $\overline{\Delta}(X) = 0$, $\overline{\Delta}(Y) =X^m$ is a free algebra. For this derivation any $w$-homogeneous component (recall that $w(X) = 1$, $w(Y) = m$) of a constant is also a constant, hence there is a~homogeneous free generating set $F_1, F_2, \ldots$ of $R_{X^m}$. There is a~bijection $\pi$ between the elements of $R_{X^m}$ and $R_F$ obtained by replacing $X^m$ in each bracket of an element of $R_{X^m}$ by $F=f(X)$. Therefore $\pi(F_1), \pi(F_2), \ldots$ is a generating set of $R_F$ which is free since $w(\pi(F_i) - F_i) < w(F_i)$.
\end{proof}

It remains to produce a homogeneous set freely generating $R_{X^m}$.

\begin{Lemma}\label{homogeneous free generating set} The algebra $R_{X^m}$ is generated by $X$ and bracketed words
\[
T_1^{i_1}X^{j_1} \cdots X^{j_{k-1}}T_1^{i_k},
\]
where $i_1, i_2, \ldots, i_k > 0$, $j_1, j_2, \ldots, j_{k-1} < m$, and where the right brackets $\}$ are preceded by~$T_1$ $($i.e., there are no configurations $X\})$.
\end{Lemma}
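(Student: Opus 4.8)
The plan is to show that the subalgebra $\mathcal{G}$ generated by $X$ together with the listed normal-form bracketed words coincides with $R_{X^m}$. One inclusion is immediate, since every listed generator is built from $X$ and $T_1$ by multiplication and applications of $\boxdot$, and hence lies in $R_{X^m}$. For the reverse inclusion I would use that $R_{X^m}$ is the minimal subalgebra containing $K\langle X,T_1\rangle$ and closed under $\boxdot$: since $X,T_1\in\mathcal{G}$ (the generator $T_1$ being the case $k=1$, $i_1=1$ with no brackets), it suffices to prove that $\mathcal{G}$ is closed under $\boxdot$. Throughout I would work with the weight grading $w(X)=1$, $w(Y)=m$ (so $w(T_1)=m+1$ and $\boxdot$ raises $w$ by $2m$) and induct on weight, with a secondary induction on the nesting depth of the brackets.

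The basic tools are the two peeling identities, obtained by direct computation from $\boxdot(A)=YAX^m-X^mAY$:
\[
\boxdot(XA)=X\boxdot(A)+T_1AX^m,\qquad \boxdot(AX)=\boxdot(A)X+X^mAT_1,
\]
together with their iterates $\boxdot(X^aA)=X^a\boxdot(A)+[Y,X^a]AX^m$ and the identity $\boxdot(X^a)=[Y,X^{a+m}]\in K\langle X,T_1\rangle$, where $[Y,X^a]=\sum_\ell X^\ell T_1X^{a-1-\ell}$. These let me push $X$-letters off the right end of any bracket until its content ends in a power of $T_1$, thereby eliminating the forbidden configuration $X\}$, and to strip leading $X$-letters out as factors equal to the generator $X$. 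The correction terms that appear ($T_1AX^m$, $[Y,X^a]AX^m$, and so on) carry one fewer enclosing bracket, so they are absorbed by the secondary induction on nesting depth.

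The main obstacle is reducing the interior $X$-powers to exponents $<m$, and more generally computing $\boxdot$ on a product, because $\boxdot$ is not a derivation. Expanding $\boxdot(AB)=A\boxdot(B)+[Y,A]BX^m+[A,X^m]BY$ shows that the peeling identities reach only $X$-blocks adjacent to a bracket boundary; an interior block of length $\geq m$ trapped between two $T_1$'s, the smallest instance being $\boxdot(T_1X^mT_1)$, is out of their reach. The real difficulty is that splitting such a bracket naively produces commutators $[Y,T_1]=YT_1-T_1Y$ that do not lie in $\mathcal{G}$ and must be resummed into genuine $\boxdot$-brackets. I would resolve this exactly in the spirit of the proof of Theorem~\ref{preparing of freedom}, transporting the interior $X^m$ toward a boundary by means of commutation identities between $\boxdot$ and the $X^m$-insertion operators $\bigtriangledown_r,\bigtriangledown_l$ (there one finds $[\bigtriangledown_r,\boxdot]=-\bigtriangledown_l(\,\cdot\,)[Y,X^m]$ and its analogues), each step strictly lowering the weight or the nesting depth so that the induction applies. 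Once this interior reduction is in place, the weight/depth induction closes: every bracketed word is rewritten as a polynomial in $X$ and the normal-form generators, which gives $\mathcal{G}=R_{X^m}$.
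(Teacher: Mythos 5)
Your overall strategy is legitimate and genuinely different from the paper's: you want to prove that the subalgebra $\mathcal{G}$ generated by $X$ and the listed words is closed under $\boxdot$ and then invoke the minimality of $R_{X^m}$, whereas the paper never establishes closure under $\boxdot$ directly --- it runs a leading-monomial argument, exhibiting for every bracketed word an element of the claimed subalgebra with the same leading monomial in the order $Y\gg X>1$ and reducing. Your peeling identities $\boxdot(XA)=X\boxdot(A)+T_1AX^m$, $\boxdot(AX)=\boxdot(A)X+X^mAT_1$, $\boxdot(X^a)=[Y,X^{a+m}]$ are correct, and they do dispose of leading and trailing $X$'s in a bracket.

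However, there is a genuine gap at exactly the step you yourself flag as the real difficulty: brackets whose content has an interior $X$-block of exponent $\geq m$ (model case $\boxdot(T_1X^mT_1)$), and more generally $\boxdot$ applied to an arbitrary product of your generators. Your proposed resolution --- transporting the interior $X^m$ to a boundary ``in the spirit of Theorem~\ref{preparing of freedom}'' via $[\bigtriangledown_r,\boxdot]=-\bigtriangledown_l(\,\cdot\,)[Y,F]$ --- is not an argument. The operators $\bigtriangledown_r,\bigtriangledown_l$ implement the exchange of a designated $Y$ for $F$ (forming $V_1YU_1F-V_1FU_1Y$) and in that proof serve to lower the number of occurrences of $Y$ in a constant; they do not move a power of $X$ across the interior of a bracket, and the correction terms they generate (such as $\bigtriangledown_l(V_2YU_2)[Y,F]$, which contain bare $Y$'s) are not visibly in $\mathcal{G}$ --- the very same defect you noted in the expansion $\boxdot(AB)=A\boxdot(B)+[Y,A]BX^m+[A,X^m]BY$. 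So the crucial case is deferred to an analogy that does not apply. The gap is fixable within your own framework: a direct computation gives the splitting identity
\[
\boxdot\big(AX^mB\big)=\boxdot(A)\,BX^m+X^mA\,\boxdot(B),
\]
since the cross terms $X^mAYBX^m$ cancel. This splits any bracket at an interior $X^m$ into a sum of two products of brackets of strictly smaller weight; combined with your peeling identities and the observation that a content which neither starts nor ends with $X$ and has all interior exponents $<m$ is itself a permissible word, it closes your weight induction and proves that $\mathcal{G}$ is closed under $\boxdot$. As written, though, this identity (or an equivalent mechanism) is missing, and it is the heart of the matter.
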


\begin{proof}Denote by $D$ the subalgebra of $R_{X^m}$ which is generated by words described in the lemma. Any element of $R_{X^m}$ can be written as a linear combination of bracketed words $\mu = X^{j_0}T_1^{i_1}X^{j_1} \cdots T_1^{i_k}X^{j_k}$. We shall find an element $B \in D$ with the same leading monomial $\overline{B}$ as the leading monomial $\overline{\mu}$ of $\mu$ in the lexicographic order defined by $Y\gg X > 1$. Clearly this is sufficient for the proof of the lemma.

To find the leading monomial $\overline{\mu}$ of a bracketed word $\mu$ we should replace all left brackets $\{$ by $Y$ and all right brackets $\}$ by $X^m$.

If $\overline{\mu}$ starts with $X$ then $\mu = X\mu_1$ (as an element of $K\langle X,Y\rangle$) where $\mu_1 \in R_{X^m}$ and we can use induction on weight to claim that there is an element $B_1 \in D$ such that $\overline{\mu_1} = \overline{B_1}$ (or even that $\mu_1 \in B$).

If $\mu$ cannot be written as $\boxdot(\nu)$ then $\mu = (\mu_1)(\mu_2)$ where brackets $()$ separate elements of $R_{X^m}$ and $w(\mu_i) < w(\mu)$. Hence we can use induction to claim that $\overline{\mu_1} = \overline{B_1}$, $\overline{\mu_2} = \overline{B_2}$ where $B_i \in D$.

If $\mu = \boxdot(\nu)$ then $w(\mu) = w(\nu) + 2m$ and we may assume that $\overline{\nu} = \overline{B}$ where $B \in D$. Since $B \in D$ we can write $B = \big(X^{j_0}\big)(V_1)\big(X^{j_1}\big)\cdots (V_k)\big(X^{j_k}\big)$ where $V_i \in D$ and $(X^j) = X^j$ and $\overline{\mu} = YX^{j_0}\overline{(V_1)\big(X^{j_1}\big)\cdots (V_k)}X^{j_k + m}$. Inasmuch as $V_i \in D$ we may assume that the first and the last letters in all $V_i$ (as bracketed words) are $T_1$.

If $j_0 > 0$ then $\overline{T_1}\big(X^{j_0-1}\big)\overline{(V_1)\big(X^{j_1}\big)\cdots (V_k)}\big(X^{j_k + m}\big) = \overline{\mu}$.

If $j_0 = 0$, $j_s \geq m$ where $s$ is the smallest possible then
\[
\overline{\big\{(V_1)\big(X^{j_1}\big)\cdots (V_s)\big\}\big(X^{j_s - m}\big)\cdots (V_k)}\big(X^{j_k + m}\big) = \overline{\mu}.
\]

If all $j_s < m$ then $\mu \in D$.
\end{proof}

\begin{Theorem}The algebra $D = R_{X^m}$, $m > 0$, is freely generated by $X$, $T_1$ and words $\boxdot\big(T_1^{i_1}X^{j_1} \cdots \allowbreak X^{j_{k-1}}T_1^{i_k}\big)$, where $i_1, i_2, \ldots,i_k > 0$, $j_1, j_2, \ldots, j_{k-1} < m$, and $T_1^{i_1}X^{j_1} \cdots X^{j_{k-1}}T_1^{i_k}$ are bracketed words described in Lemma~{\rm \ref{homogeneous free generating set}} $($we shall refer to these words as permissible and to $T_1^{i_1}X^{j_1} \cdots X^{j_{k-1}}T_1^{i_k}$ without brackets as the root of the corresponding word$)$.
\end{Theorem}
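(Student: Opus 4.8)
The plan is to prove freeness by a leading-monomial (code) argument with respect to the order $Y\gg X$. Since $K\langle X,Y\rangle$ is a domain and the lexicographic order is multiplicative, leading words multiply: $\overline{ST}=\overline{S}\,\overline{T}$. Consequently a generating set of a subalgebra is a \emph{free} generating set as soon as the leading words of its members are pairwise distinct and generate a free submonoid of $\{X,Y\}^{\ast}$ (that is, form a code): any nontrivial relation would force two distinct products of generators to share a leading word, which a code forbids, while the surviving maximal leading term shows the combination is nonzero. By Lemma~\ref{homogeneous free generating set}, together with the observation that every bracketed word splits as a product of the letters $X$, $T_1$ and its maximal top-level bracketed blocks $\boxdot(\cdot)$, the set $\mathcal{G}=\{X,\,T_1\}\cup\{\boxdot(w)\colon w\ \text{permissible}\}$ generates $D=R_{X^m}$. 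So the whole theorem reduces to proving that the leading words of $\mathcal{G}$ form a code.

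First I would record the leading words: $\overline{X}=X$, $\overline{T_1}=YX$, and $\overline{\boxdot(w)}=Y\,\overline{w}\,X^m$, where $\overline{(\cdot)}$ flattens a bracketed word by the substitutions $\{\mapsto Y$, $\}\mapsto X^m$, $T_1\mapsto YX$ (the explicit powers $X^{j_\ell}$ are untouched). Passing to the normal form supplied by Lemma~\ref{homogeneous free generating set}, I may assume that the content of every pair of brackets begins \emph{and} ends with $T_1$ (the end-condition being exactly ``no $X\}$''), so that each $\overline{\boxdot(w)}$ begins with $YY$. The two defining constraints then have transparent effects. Because each $\}$ is glued to a preceding $T_1$ and can never be preceded by an explicit $X$, every maximal run of $X$'s inside a single $\boxdot$-block has the shape $X^{1+mp+j'}$ with $p\ge 0$ the number of closing brackets it contains and $0\le j'<m$ (here $j_\ell<m$ is essential), whence $p=\lfloor(\ell-1)/m\rfloor$ is read off from the run length $\ell$. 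Because every content starts with $T_1$, an interior $Y$ is an opening bracket precisely when it is immediately followed by another $Y$, and it heads a $T_1$ precisely when it is followed by $X$.

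The heart of the argument is a left-to-right decoding showing the factorization of any product word $M=\overline{g_1}\overline{g_2}\cdots$ is forced. Since every leading word ends in $X$ (as $m>0$), a block $YY$ never straddles a factor boundary, so the rule ``$Y$ followed by $Y$ is an opening bracket, $Y$ followed by $X$ heads a $T_1$'' is globally valid; this fixes all openings and lets me maintain a bracket depth $d$. A factor beginning with $X$ must be the generator $X$; a $Y$ followed by $X$ at depth $0$ is a standalone $T_1$; and a $Y$ followed by $Y$ opens a $\boxdot$-block whose extent is determined by the closings. Meeting a maximal $X$-run of length $\ell$ at depth $d$, I close $p=\min(\lfloor(\ell-1)/m\rfloor,\,d)$ brackets: while $d$ stays positive the remaining $X$'s form the interior power $X^{j'}$ with $j'<m$, and once $d$ returns to $0$ the current $\boxdot$-block ends and any further $X$'s in the physical run are separate copies of the generator $X$ at ground level. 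I would then verify that a genuine factorization obeys these rules (inside a block one always has $p_{\max}=p\le d$ and $j'<m$, while the depth reaches $0$ only at a top-level block's end, where trailing $X$'s become $X$-generators), so the decoding returns exactly the original factorization and admits no competitor; hence the leading words form a code and $\mathcal{G}$ freely generates $D$.

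The main obstacle is precisely this decoding step: verifying that the $X$'s of a run can be apportioned among $T_1$-tails, closing brackets $X^m$, interior remainders $X^{j'}$ and ground-level copies of $X$ in one and only one way. The delicate point is the interaction at a boundary where a $\boxdot$-block's trailing $X^m$ abuts following copies of the generator $X$, since these merge into a single run; the rule ``close only until the depth returns to $0$'' is what resolves the ambiguity, and its correctness rests on the two hypotheses $j_\ell<m$ and no-$X\}$ (reinforced by the normal-form condition no-$\{X$), which prevent any interior run from reaching length $m+1$ without a closing bracket and prevent an opening $Y$ from masquerading as a $T_1$-head. Once unique decodability is established, freeness is immediate from the multiplicativity of leading words, completing the proof.
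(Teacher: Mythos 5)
Your proof is correct and follows essentially the same route as the paper's: the paper likewise reduces freeness to unique decodability of leading monomials (flattening $\{\mapsto Y$, $\}\mapsto X^m$, $T_1\mapsto YX$), using exactly your rules that a subword $YX$ can only arise as $\overline{T_1}$ and that an $X$-run of length $\ell$ forces $\lfloor (\ell-1)/m\rfloor$ closing brackets with remainder $X^{j'}$, $j'<m$, so that root and bracketing are determined by the leading word and a product decomposition would yield two bracketings. The only difference is one of detail: your left-to-right scan with the depth cap $p=\min\big(\lfloor(\ell-1)/m\rfloor,d\big)$ makes explicit the ground-level boundary case (a block's trailing $X^m$ merging with subsequent copies of the generator $X$) that the paper's segment-by-segment decoding leaves implicit.
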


\begin{proof}It is sufficient to check that the leading monomial $\overline{\mu}$ of a permissible word cannot be presented as a product of the leading monomials of permissible words of a smaller weight.

To check this consider the leading monomial $\overline{\mu} = Y^{b_1} \cdots X^{a_{s-1}}Y^{b_s}X^{a_s}$ of a permissible $\mu$. (Observe that $b_1 > 0$, $a_s = m+1$ since $\overline{\boxdot(V)} =Y\overline{V}X^m$.)

The number of $T_1$ in the bracketed representation of $\mu \in D$ must be equal to $s$ since in the leading monomial of any word from $D$ a subword $YX$ can appear only as $\overline{T_1}$. So the number of brackets $\{$ in $\mu$ is $\deg_Y(\overline{\mu}) - s$. Of course the number of brackets $\}$ is the same.

A subword $Y^{b_i}X^{a_i}$ can appear in $\overline{\mu}$ only as $\{\ldots\{T_1\}\ldots\}X^{d_i}$ where the number of left brackets is $b_i - 1$, the number of right brackets is the integral part of $\frac{a_i - 1}{m}$ and $0 \leq d_i <m$ is the remainder of the division of $a_i - 1$ by $m$. Therefore the root and the bracketing of $\mu$ are uniquely determined by $\overline{\mu}$. But we would have two different bracketings if $\overline{\mu} = (\overline{\nu_1})(\overline{\nu_2})$. This finishes a proof of the theorem.
\end{proof}

\subsection*{Acknowledgements}

The authors are grateful to the Beijing International Center for Mathematical Research for warm hospitality during their visit when this work was started. The second author is grateful to the Max-Planck-Institut f\"ur Mathematik in Bonn where he was a visitor when this project was finished. While working on this project he was also supported by a FAPESP grant awarded by the State of Sao Paulo, Brazil.

\pdfbookmark[1]{References}{ref}
\LastPageEnding

\end{document}